\numberwithin{equation}{section}
\numberwithin{equation}{subsection}
\theoremstyle{plain}
\newtheorem{theorem}[equation]{Theorem}
\newtheorem{lemma}[equation]{Lemma}
\newtheorem{proposition}[equation]{Proposition}
\newtheorem{corollary}[equation]{Corollary}
\newtheorem{definition}[equation]{Definition}
\theoremstyle{definition}
\newtheorem{remark}[equation]{Remark}
\newcommand{\mo}{\mathfrak{m}}
\def\Q{{\mathbb Q}}
\def\Z{{\mathbb Z}}
\newcommand{\cal}{\mathcal}
\begin{document}

\title[Invariants of open books]{Invariants of open books of links
of surface singularities}

\author[A. N\'emethi]{Andr\'as N\'emethi}
\thanks{The first author is partially supported by OTKA grants,
the second author by Galatasaray University Research fund, and both authors by  the BudAlgGeo
project, in the framework of the European Community's `Structuring the
European Research Area' programme. }
\address{R\'enyi Institute of Mathematics,  1053 Budapest,
Re\'altanoda u. 13--15,  Hungary} \email{nemethi@renyi.hu}
\urladdr{http://www.renyi.hu/\textasciitilde nemethi}
\author[M. Tosun]{Meral Tosun}
 \address{Galatasaray University, Departement of Mathematics, 34257 Ortakoy-Istanbul, Turkiye}
\email{mtosun@gsu.edu.tr}
\urladdr{http://math.gsu.edu.tr/tosun/index.html}

\keywords{Surface singularity, Milnor open book, semigroup of divisors.}

\subjclass[2000]{Primary: 32S25 Secondary. 32S50, 57R17}

\date{}
\maketitle

\begin{abstract}
If $M$ is the link of a complex normal surface singularity, then it carries a canonical contact structure
$\xi_{can}$, which can be identified from the topology of the 3--manifold $M$. We assume that $M$ is a
 rational homology sphere. We compute the {\em support
genus}, the {\em binding number} and the {\em norm} associated with the open books which support  $\xi_{can}$, provided that we restrict ourself to the case of (analytic) Milnor open books. In order to do this, we determine monotoneity properties of the genus and the Milnor number of all Milnor fibrations in terms of the Lipman cone.

We generalize  results of \cite{Mohan} valid for links of rational
surface singularities, and we answer some  questions of  Etnyre and Ozbagci \cite[section 8]{burak-etnyre}
 regarding the above invariants.
\end{abstract}

\section{Introduction}

Let $M$ be an oriented 3-dimensional manifold. By a result of Giroux
\cite{Gi}  there is a one-to-one correspondence between
open book decompositions  of $M$ (up to stabilization) and contact structures
on $M$ (up to isotopy). In \cite{burak-etnyre} Etnyre and Ozbagci
consider three invariants associated with a fixed contact structure $\xi$ defined in terms of
all open book decompositions supporting it:

\vspace{1mm}

$\bullet$ the {\it support genus} sg$(\xi)$ is the minimal possible genus for
a page of an open book that supports $\xi$;

\vspace{1mm}

$\bullet$ the {\it binding number} bn$(\xi)$ is the minimal number of of
binding components for an open book supporting $\xi$ and that has pages of
genus sg$(\xi)$;

\vspace{1mm}

$\bullet$ the {\it norm} $\mathfrak{n}(\xi)$ of $\xi$
is the negative of the maximal (topological)
Euler characteristic of a  page of an open book that supports $\xi$.

\vspace{1mm}

In the present article we  determine and characterize completely
the above invariants under the following restrictions:
$M$ will be a \textit{rational homology sphere
which can be realized as the link of a complex surface singularity} $(S,0)$. Moreover, we will
restrict ourselves to the collection of those open book decompositions which
can be realized as Milnor fibrations determined by some analytic germ
(the so-called Milnor open books).
Notice that by \cite{CP}, all the Milnor open book  decompositions define the
same contact structure on $M$, the {\it canonical contact structure} $\xi_{can}$. This structure is
also induced by {\em any} complex structure $(S,0)$ realized on the topological type, and it
can be characterized completely from the topology of $M$.

Hence our results will be applied
exactly for the canonical  contact structure $\xi_{can}$, and for (analytic) Milnor open books,
cf. section \ref{s5}. The corresponding invariants are denoted by
sg$_{an}(\xi_{can})$, bn$_{an}(\xi_{can})$ and $\mathfrak{n}_{an}(\xi_{can})$.

The present article  generalize  results of \cite{Mohan} valid for links of rational
surface singularities, and we answer some  questions of  \cite[section 8]{burak-etnyre}
 regarding the above invariants.

\section{Preliminaries}

\subsection{Invariants associated with a resolution.}
In what follows we assume that $(S,0)$ is a complex normal surface singularity
whose link is a rational homology sphere. Let $\pi :X\longrightarrow S$ be a
good resolution. We will denote by $E_1,\ldots ,E_n$ the smooth irreducible
components of the exceptional curve $E:=\pi^{-1}(0)$ and by $\Gamma $ its dual
graph. By our assumption,  each $E_i$ has genus 0 and $\Gamma $ is a tree.

Consider the free group ${\mathcal G}:=H_2(X,\Z)$ generated by the irreducible
components of $E$, i.e. ${\mathcal G}=\lbrace D=\sum_{i=1}^{n}
m_iE_i \mid m_i\in \Z \rbrace $. On ${\mathcal G}$ there is a natural
intersection pairing $(\cdot,\cdot)$ and a natural partial ordering:
$\sum_i m'_iE_i\leq \sum_i m''_iE_i$ if and only if $m'_i\leq m''_i$ for all
$i$.

We denote the Lipman cone (semi-group) by
$${\mathcal E}^+=\lbrace D\in{\mathcal G}
\mid (D, E_i)\leq 0 \ \ \mbox{for any $i$} \rbrace. $$
It is known (see e.g. \cite{A,Lip})
that if $D=\sum m_iE_i\in{\mathcal E}^+$ then
$m_i\geq 0$ for all $i$, and $m_i>0$ for all $i$ whenever
$D\in{\mathcal E}^+ \setminus \{0\}$.
Moreover, ${\mathcal E}^+ \setminus \{0\}$
admits a unique minimal element (the so-called Artin, or fundamental cycle),
denoted by $Z_{min}$.

The definition of ${\mathcal E^+} $ is motivated by the following fact.
Let $f:(S,0)\to ({\mathbb C},0)$ be a germ of an analytic function. Then the
divisor $(\pi^*(f))$ in $X$ of $f\circ \pi$ can be written as
$D_\pi(f)+S_\pi(f)$, where $D_\pi(f)$, called the compact part of $(\pi^*(f))$,
 is supported on $E$, and $S_\pi(f)$ is the strict
transform by $\pi $ of $\{f=0\}$.  The collection of compact parts (when $f$
runs over ${\mathcal O}_{S,0}$) forms a
semi-group too, it will be denoted by ${\mathcal A^+}$. It is
a sub-semi-group of ${\mathcal E^+} $
(since $(\pi^*(f))\cdot E_i)=0$ and $(S_\pi(f)\cdot E_i)\geq 0$ for all $i$).
The subset ${\mathcal A}^+\setminus \{0\}$ also has
a unique minimal element $Z_{max}$,
the {\it maximal ideal divisor}. It is
the divisor of the generic hyperplane section.  By definitions
$Z_{min}\leq Z_{max}$.

For rational singularities one has ${\mathcal A}^+={\mathcal E}^+ $ (hence $Z_{max}=
Z_{min}$ too). But, in general, these equalities do not hold.
The fundamental cycle $Z_{min}$ can be obtained by Laufer's (combinatorial)
algorithm (cf. \cite{Laufer}), but the structure of ${\mathcal A}^+$ (and even
of $Z_{max}$ too) can be very difficult, it depends
essentially on the analytic structure of $(S,0)$.

\subsection{(Milnor) open books.}\label{ss:ob}
Assume that $f:(S,0)\to ({\mathbb C},0)$ defines an isolated singularity.
Let $M$ be the link of $(S,0)$ and $L_f:=f^{-1}(0)\cap M
\subset M$ the (transversal)
intersection of $f^{-1}(0)$ with  $M$. Then the Milnor fibration of $f$
defines an open book decomposition of $M$ with binding $L_f$. One has the
following facts:

\begin{enumerate}\label{en:1}
\item\label{en:1.1} For any $f$,
consider an embedded good resolution $\pi$ of the pair $(S,f^{-1}(0))$.
Then the strict transform $S_\pi(f)$ intersects $E$ transversally,
and the number of intersection points $(S_\pi(f),E_i)$ (i.e. the number of
binding components associated with $E_i$) is exactly $-(D_\pi(f),E_i)$.
Since the intersection form is negative definite, the collection of
binding components $\{(S_\pi(f),E_i)\}_{i=1}^n$ and $D_\pi(f)\in {\mathcal
  A^+}$ determine each other perfectly.

Moreover, by classical results  of Stallings and Waldhausen, the
(topological type of the) binding  $L_f\subset M$ determines
completely the open book up to an isotopy, provided that $M$ is a
rational homology sphere. (\cite[page 34]{EN} provides two
different arguments for this fact, one of them based on \cite{BL},
the other one on \cite{W}. For counterexamples for the statement in
the general situation, see e.g. \cite{41}.)

Notice that the classification of all the (Milnor) open books associated
with a \textit{fixed analytic type} of $(S,0)$ and analytic
germs $f\in{\mathcal O}_{S,0}$ can be a very difficult problem (in fact, as
difficult  as the determination of ${\mathcal A}^+$).

\item\label{en:1.2} Therefore, from a topological points of view, it is more
  natural to consider the open books of  all the analytic
  germs associated with {\it all the analytic structures} supported by the
topological type of $(S,0)$.

Notice that for a fixed topological type of $(S,0)$, in any (negative definite)
plumbing graph of $M$ one can also define the cone ${\mathcal E^+} $.
The point is that for any non-zero element $D$ of  ${\mathcal E^+} $ there is a
\textit{convenient}
 analytic structure on $(S,0)$ and an  analytic germ $f$, such that
the plumbing graph can be identified with a dual resolution graph (which
serves as an embedded resolution graph for the pair $(S,f^{-1}(0))$ too),
and $D$ is the compact part $D_\pi(f)$, see \cite{P,NP}.  Hence, changing the analytic
structure of $(S,0)$, we fill by the collections  ${\mathcal A^+}$
all the semi-group ${\mathcal E^+} $.

In particular, for any $Z\in{\mathcal E}^+\setminus \{0\}$, there is an open
book  decomposition (well-defined up to an isotopy) realized as Milnor open
book (by a convenient choice of the analytic objects).

\item\label{en:1.3} For any fixed analytic type $(S,0)$, the open book
associated with $Z_{max}$ is the Milnor fibration of the generic hyperplane
section, in particular this open book is (resolution) graph-independent.
Similarly, for a fixed topological type of $(S,0)$,
the open book associated with $Z_{min}$ is also graph-independent.
It depends only on the topology of the link.
\end{enumerate}

\subsection{Invariants of Milnor open books.}\label{ss:2.3}
Let us fix $M$, a plumbing (or, a dual resolution) graph $\Gamma$.
Let us consider a Milnor open book associated with an element
$Z\in {\mathcal E^+}\setminus \{0\}$, cf. (\ref{en:1}).
In the sequel we will
consider the following numerical invariants of it:

\begin{enumerate}\label{en:2}

\item\label{en:2.1} The number of binding components $\beta(Z)$ is given by
$-(Z,E)$ (which is  $\geq 1$).

\item\label{en:2.2} Let $F$ be the fiber of the open book. It is an
oriented  connected  surface with $-(Z,E)$ boundary components.
Let $g(Z)$ be its genus (the so-called page-genus of the open book)
and $\mu(Z)$ be the first Betti-number of $F$
(the so-called Milnor number). Clearly:
\begin{equation}\label{eq:1}
\mu(Z)=2\cdot g(Z)-1+\beta(Z)=  2\cdot g(Z)-1-(Z,E)\geq 2 g(Z).
\end{equation}

\end{enumerate}

We will also write $\nu_i=(E_i,E-E_i)$, the number of components of
$E-E_i$ meeting $E_i$.

\subsection{The `monotoneity' property.}
The main results of the next sections targets the `monotoneity'
property of invariants listed in (\ref{ss:2.3}).

\begin{definition}\label{def:0}
Assume that for any resolution $\pi$ of $(S,0)$ one has a
map $I_\pi:{\mathcal E^+}\setminus \{0\}\to \Z_{\geq 0}$.
We say that $I=\{I_\pi\}_\pi$ is monotone if
for any two cycles $Z_i\in   {\mathcal E^+}\setminus \{0\}$ ($i=1,2$) with
$Z_1\leq Z_2$ one has $I_\pi(Z_1)\leq I_\pi(Z_2)$
for any $\pi$.
\end{definition}

\begin{remark}\label{re:m}
Assume that the collection of invariants $\{I_\pi\}_\pi$ can be transformed
into (or comes from) an invariant $I$ which associates with any (Milnor)
open book
$\mo$ of the link a non-negative integer.
For any fixed analytic type, let $\mo_{\max}$ be the Milnor
open book associated with
$Z_{max}$ (considered in any resolution). Similarly, for any topological type,
let $\mo_{min}$ be the Milnor
open book associated with $Z_{min}$ (in any resolution
of an analytic structure conveniently chosen); cf. (\ref{en:1})(\ref{en:1.3}).

Then, whenever $\{I_\pi\}_\pi$ is monotone, one has automatically
the next consequences:

\begin{enumerate}\label{cor:2}
\item Fix an analytic singularity $(S,0)$ and consider all the Milnor open
books associated with all isolated holomorphic germs $f\in{\mathcal O}_{S,0}$.
Then the minimum of integers $I(\mo)$  of all these Milnor open books $\mo$ is
realized  by the generic hyperplane section, i.e. by $I(\mo_{max})$.

\item Fix a topological type of a normal surface singularity, and consider
the open books associated with all the isolated holomorphic germs of all the
possible analytic structures supported by the fixed topological type.
Then the minimum of all integers $I(\mo)$  of all these Milnor open books
$\mo$ is  realized  by the open book associated with the
 Artin cycle, i.e. by $I(\mo_{min})$.
\end{enumerate}
\end{remark}

\section{The monotoneity of the genus}

\subsection{The relation between the genus and the Euler-characteristic.}
For any fixed graph $\Gamma$, we consider the `canonical cycle' $K\in
{\mathcal G}\otimes \Q$ defined by the (adjunction formulas)
$(K+E_i,E_i)+2=0$ for all $i$. Then the (holomorphic) Euler-characteristic of
any element $D\in {\mathcal G}$ is given by
\begin{equation}\label{eq:2}
\chi(D):=-\frac{1}{2}(D,D+K)\in \Z.
\end{equation}

\begin{proposition} Fix $Z\in {\mathcal E^+}\setminus \{0\}$. Then
\begin{equation}\label{eq:3}
g(Z)=1+(Z, E)+\chi(-Z).
\end{equation}
\end{proposition}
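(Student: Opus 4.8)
The plan is to deduce everything from the topology of the Milnor fibre $F$, by computing its (topological) Euler characteristic $\chi_{\mathrm{top}}(F)$ in two ways. Fix an analytic structure on the topological type and a germ $f$ realizing $Z=D_\pi(f)$ on some embedded resolution $\pi$ of $(S,f^{-1}(0))$, as recalled in Section~\ref{ss:ob}. Since $F$ is a connected oriented surface with $\beta(Z)$ boundary circles and genus $g(Z)$, one has $\chi_{\mathrm{top}}(F)=2-2g(Z)-\beta(Z)=2-2g(Z)+(Z,E)$, using $\beta(Z)=-(Z,E)$; this is just a rewriting of (\ref{eq:1}). Hence it suffices to prove
\begin{equation*}
\chi_{\mathrm{top}}(F)=-(Z,E)-2\chi(-Z),
\end{equation*}
because then $g(Z)=1+\tfrac12(Z,E)-\tfrac12\chi_{\mathrm{top}}(F)=1+(Z,E)+\chi(-Z)$, which is (\ref{eq:3}).

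The other computation of $\chi_{\mathrm{top}}(F)$ is the heart of the matter, and uses A'Campo's formula for the map $\Phi:=f\circ\pi$ near the compact curve $E$. Writing $Z=\sum_i m_iE_i$, the divisor of $\Phi$ is $Z+S_\pi(f)$, and A'Campo's identity gives
\begin{equation*}
\chi_{\mathrm{top}}(F)=\sum_{i=1}^n m_i\,\chi_{\mathrm{top}}(E_i^{\circ}),\qquad E_i^{\circ}:=E_i\setminus\Big(\bigcup_{j\neq i}E_j\ \cup\ S_\pi(f)\Big),
\end{equation*}
the strict transform components contributing nothing, so the sum runs over the exceptional curves only. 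Each $E_i\cong\PP^1$ since the link is a rational homology sphere; by (\ref{en:1.1}) there are exactly $-(Z,E_i)$ points of $S_\pi(f)$ on $E_i$, besides the $\nu_i$ points of $E-E_i$ lying on it, whence $\chi_{\mathrm{top}}(E_i^{\circ})=2-\nu_i+(Z,E_i)$.

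It then remains to convert this into intersection-theoretic language. The adjunction relation $(K+E_i,E_i)+2=0$ together with $\nu_i=(E_i,E-E_i)=(E_i,E)-(E_i,E_i)$ gives $2-\nu_i=-(E_i,E+K)$, and summing against $Z=\sum_i m_iE_i$,
\begin{equation*}
\chi_{\mathrm{top}}(F)=\sum_i m_i\big({-}(E_i,E+K)+(Z,E_i)\big)=-(Z,E+K)+(Z,Z)=-(Z,E)+(Z,Z)-(Z,K).
\end{equation*}
Finally, from $\chi(D)=-\tfrac12(D,D+K)$ one reads $-2\chi(-Z)=(-Z,-Z+K)=(Z,Z)-(Z,K)$, so the right-hand side equals $-(Z,E)-2\chi(-Z)$, as required.

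I expect the only genuinely non-formal point to be the invocation of A'Campo's formula: one must check it applies in the present form, i.e.\ for $\Phi=f\circ\pi$ on the smooth surface $X$ over a neighbourhood of the compact set $E$ (rather than over a single point), and that the strict-transform strata drop out. If one prefers an argument from scratch, the same sum $\sum_i m_i\chi_{\mathrm{top}}(E_i^{\circ})$ results from describing $F$ directly over the resolution: above each $E_i^{\circ}$ the fibre is an unramified $m_i$-sheeted cover of $E_i^{\circ}$, these pieces being glued along annuli sitting over the nodes of $\Gamma$ and over the binding points, each such annulus contributing $0$, so additivity of $\chi_{\mathrm{top}}$ yields the formula. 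Everything after that is the adjunction bookkeeping above.
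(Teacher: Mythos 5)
Your proof is correct and follows essentially the same route as the paper's: the paper also invokes A'Campo's formula in the equivalent form $1-\mu=\sum_i(2-\nu_i-k_i)m_i$ with $k_i=-(Z,E_i)$, and then combines it with (\ref{eq:1}) and the definition (\ref{eq:2}) of $\chi$, which is exactly your adjunction bookkeeping. Your write-up merely spells out the intermediate steps (and offers a direct fibration argument as a substitute for A'Campo) that the paper leaves implicit.
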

\begin{proof} For any  $1\leq i\leq n$ consider
$k_i:=-(Z,E_i)$  (the number of binding components associated
with $E_i$). Write also $Z=\sum_im_iE_i$.
Then by the A'Campo's formula (cf. \cite{A'Campo})
 $1-\mu =\sum_{i} (2-\nu_i-k_i)m_i$.
Then use (\ref{eq:1}) and (\ref{eq:2}).\end{proof}

\begin{remark}
Since $\chi(-Z)+\chi(Z)+Z^2=0$, one also has
$g(Z)=1+(Z, E-Z)-\chi(Z).$
Since for any  $Z\in {\mathcal E^+}\setminus \{0\}$ one gets
$Z\geq E$, one has $(Z,E-Z)\geq 0$ too. In particular:
\begin{equation}\label{eq:gchi}
g(Z)\geq 1-\chi(Z).\end{equation}
Recall that \textit{rational} singularities are characterized by
$\chi(Z_{min})=1$ \cite{A}. If additionally,
$(S,0)$ is a \textit{minimal} (i.e. if $Z_{min}=E$), then
$g(Z_{min})=0$. For arbitrary rational germs one has
$g(Z_{min})=(Z_{min},E-Z_{min})\geq 0$. This number, in general, might be
non-zero: e.g. in the case of the $E_8$-singularity it is 1.
Considering arbitrary singularities, $\chi(Z_{min})$ tends to $-\infty$ as the
complexity of the topological type of the germ increases, hence by
(\ref{eq:gchi}) $g(Z_{min})$ tends to infinity too.
\end{remark}

\subsection{The ``virtual genus'' and its positivity.}
The formula (\ref{eq:3}) motivates the following definition. For $D=\sum_i
m_iE_i\in{\mathcal G}$, let $|D|$ be the support $\sum_{i\,:\,m_i\not=0}E_i$
of $D$ and $\#(D)$ the number of connected components of $|D|$.

\begin{definition}\label{def:1}
For  $D\in {\mathcal G}$, $D\geq 0$,
we define the ``virtual genus'' of $D$ by
\begin{equation}\label{eq:4}
g(D)=\#(D)+(D, |D|)+\chi(-D).
\end{equation}
\end{definition}

Since for any  $Z\in {\mathcal E^+}\setminus \{0\}$ one has
$|Z|=E$, and $E$ is connected,
(\ref{eq:4}) extends (\ref{eq:3}). Moreover, for any such
$Z\in {\mathcal E^+}\setminus \{0\}$, by its
definition, $g(Z)\geq 0$.

\begin{theorem}\label{th:1}
The virtual genus of any  $D\in {\mathcal G}$, $D\geq 0$,
   is positive: $g(D)\geq 0$.
\end{theorem}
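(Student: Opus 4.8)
The statement is a purely combinatorial/negative-definiteness fact: for any effective $D\in\mathcal G$,
\[
g(D)=\#(D)+(D,|D|)+\chi(-D)\geq 0.
\]
Writing $\chi(-D)=-\tfrac12(-D,-D+K)=-\tfrac12(D,D-K)$ and using the adjunction relations $(K+E_i,E_i)=-2$, one can first reduce to the case where $|D|=E$, i.e. where $D$ is supported on the whole graph: if $|D|$ is a proper subset, then $D$ lives in the lattice of the full subgraph $\Gamma'$ spanned by $|D|$, every connected component of $\Gamma'$ is again a negative-definite tree (all genera zero), and $\#(D)$, $(D,|D|)$ and $\chi(-D)$ are each additive over the connected components of $\Gamma'$. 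So it suffices to prove $1+(D,E)+\chi(-D)\geq 0$ for $D\geq 0$ with connected support equal to a negative-definite tree $\Gamma$ (not assuming $D$ is in $\mathcal E^+$).

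**Main step: reduce to the Lipman cone via a ``pushing down'' algorithm.**
The inequality $g(Z)\geq 0$ for $Z\in\mathcal E^+\setminus\{0\}$ is already known (it is the sentence just before the theorem, coming from $g(Z)=\beta(Z)$-shifted Milnor-number positivity, or directly from Definition \ref{def:1}). The plan is to show that among all $D\geq 0$ with a fixed connected support $\Gamma$, the virtual genus is minimized on $\mathcal E^+$, or at least that any $D$ can be modified without increasing $g(D)$ until it lands in (a translate of) the Lipman cone or decomposes. Concretely: if $D\notin\mathcal E^+$ on its support, pick $i$ with $(D,E_i)>0$ and replace $D$ by $D+E_i$ (if $E_i\subseteq |D|$ this keeps the support; we must also worry about $m_i=0$, handled by the support-reduction above). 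Compute
\[
g(D+E_i)-g(D)=(E_i,|D|)+(D,E_i)+\chi(-D-E_i)-\chi(-D)=(E_i,|D|)+(D,E_i)-(D,E_i)-\chi(E_i)= (E_i,E)+ (E_i,E_i)/2 \cdot(\text{adjustment}),
\]
i.e. using $\chi(E_i)=-\tfrac12(E_i,E_i+K)=1$ one gets $g(D+E_i)-g(D)=(D,E_i)+\nu_i-1$ (a computation identical in spirit to the proof of the Proposition), which is $\geq (D,E_i)\geq 1>0$ unless $\nu_i=0$; the case $\nu_i=0$ means $\Gamma$ is a single vertex, treated directly. Hence increasing $D$ strictly increases $g$, so I instead run Laufer's algorithm \emph{downward}: starting from any $D$, the same computation shows that the ``Laufer step'' $D\mapsto D+E_i$ raises $g$, so going the other way, $g$ cannot have been negative to begin with once we connect $D$ to $E$ (which lies in $\mathcal E^+$ after finitely many Laufer steps, where $g(E)\geq 0$ is known). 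Because each step changes $g$ by a \emph{non-negative} amount, and the endpoint has $g\geq 0$, monotonicity along the chain does not immediately give the claim — so the correct organization is the reverse: show every effective $D$ on $\Gamma$ is $\leq$ some cycle of $\mathcal E^+$-type after support considerations, then use that each Laufer \emph{increment} adds a non-negative quantity to $g$, proving $g(D)\le g(\text{that cycle})$ is the wrong direction; instead we use that Laufer's algorithm \emph{from $D$ upward} reaches an element of $\mathcal E^+$ while only increasing $g$, hence it is enough to bound $g$ from below along the way, and the base of the induction is $g\geq 0$ for the \emph{minimal} configuration, namely when $D$ has small support — ultimately a single exceptional curve $E_i$, for which $g(E_i)=1+(E_i,E_i)+\chi(-E_i)=1-(E_i,E_i)+ ( -\tfrac12(E_i,E_i+K)) $, which one checks equals $0$ using $(K+E_i,E_i)=-2$.

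**Where the difficulty lies.**
The genuine obstacle is organizing the induction correctly: naively, Laufer-type moves only \emph{increase} $g$, so one cannot push a bad $D$ toward a known-good region and conclude. The fix is a downward/structural induction on the support $|D|$ combined with the observation that for $D$ with $(D,E_i)\le 0$ for all $i$ (i.e. $D$ in the Lipman cone of its own support-subgraph) we are done by the already-established positivity, while if some $(D,E_i)>0$ then subtracting a suitable multiple of $E_i$ — allowed because $m_i$ must be comparatively large when $(D,E_i)>0$ on a negative-definite graph — produces a smaller effective cycle with no larger virtual genus, and eventually either lands in the cone or drops the support. Formalizing ``$m_i$ is large enough to subtract'' uses negative-definiteness: $(D,E_i)>0$ with $(E_i,E_i)\le -1$ forces $m_i\ge 1+\sum_{j\ne i}m_j(E_j,E_i)\ge 1$ actually a cleaner route is to note $(D-E_i,E_i)=(D,E_i)-(E_i,E_i)\ge (D,E_i)+1\ge 2>0$ so the inequality only improves and $D-E_i\ge 0$ whenever $m_i\ge 1$, which holds since $(D,E_i)>0\Rightarrow m_i(E_i,E_i)< -\sum_{j\ne i}m_j(E_i,E_j)\le 0\Rightarrow m_i>0$. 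Then $g(D-E_i)-g(D)=-(D,E_i)-\nu_i+1\le -1<0$ unless $\nu_i\le 1-(D,E_i)\le 0$, i.e. $\nu_i=0$; so $g(D-E_i)\le g(D)$, strictly decreasing in the relevant cases, and the process terminates at a cycle in $\mathcal E^+$ on a connected subgraph (or at a lower-support configuration, handled by additivity), where positivity is known. This gives $g(D)\ge g(\text{terminal cycle})\ge 0$, completing the proof.
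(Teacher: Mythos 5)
Your reduction to the case where $|D|$ is connected and equals the whole exceptional divisor (via additivity of $\#$, $(D,|D|)$ and $\chi$ over the connected components of the support) is fine and is exactly how the paper begins. The fatal problem is in your ``pushing down'' step. The correct increment formulas (assuming both cycles have full connected support, so that the $\#$ and $|D|$ terms do not move) are
\[
g(D+E_i)-g(D)=\nu_i-1-(D,E_i),\qquad
g(D-E_i)-g(D)=(D,E_i)-E_i^2-\nu_i+1,
\]
obtained from $\chi(A+B)=\chi(A)+\chi(B)-(A,B)$, $\chi(\pm E_i)=\mp E_i^2\cdot 0+\dots$, more precisely $\chi(E_i)=1$, $\chi(-E_i)=-E_i^2-1$, and $(E_i,E)=E_i^2+\nu_i$. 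Your formula $g(D-E_i)-g(D)=-(D,E_i)-\nu_i+1$ has the wrong sign on $(D,E_i)$ and drops the $-E_i^2$ term. With the correct formula, when $(D,E_i)\geq 1$ one gets $g(D-E_i)-g(D)\geq 3-\nu_i$, so on any vertex of valency $\leq 2$ the ``descent'' step \emph{strictly increases} the virtual genus. Concretely, on the $A_2$ graph ($E_1^2=E_2^2=-2$, $(E_1,E_2)=1$, $K=0$) take $D=5E_1+2E_2$: then $(D,E_2)=1>0$, $g(D)=1-7+19=13$, but $g(D-E_2)=1-6+21=16$. So the chain of moves you propose does not keep $g$ from below by its terminal value, and since the ascending (Laufer) direction changes $g$ by $\nu_i-1-(D,E_i)$, which also has no fixed sign, neither orientation of the algorithm yields the monotonicity your argument needs. (A secondary slip: $(D,E_i)>0$ does not force $m_i>0$ — e.g. $D=E_1$ on $A_2$ has $(D,E_2)=1$ — though this is harmless after your support reduction.)

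The paper's proof avoids any step-by-step descent. It takes a \emph{minimal} counterexample $D>0$ with $g(D)<0$ (possible since $g(E_i)=0$), reduces to $|D|=E$ connected, and compares the strict inequality $1+(D,E)+\chi(-D)<0$ with the inequalities $\#_i+(D-E_i,|D-E_i|)+\chi(-D+E_i)\geq 0$ valid for \emph{every} $i$ by minimality. If some $m_i=1$ this comparison is already contradictory; otherwise it yields $(D-E,E_i)\geq 0$ for all $i$ simultaneously, and summing against the coefficients of $D-E\geq 0$ gives $(D-E)^2\geq 0$, whence $D=E$ by negative definiteness — contradicting $m_i\geq 2$. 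The essential idea you are missing is this global use of negative definiteness on the whole family of one-step comparisons at once, rather than trying to make any single comparison monotone. As written, your argument does not prove the theorem.
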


\begin{proof} Assume that the statement is not true at least for one such a
cycle. Since $g(E_i)=1+E_i^2+\chi(-E_i)=0$, there exist a minimal
cycle $D>0$ with $g(D)<0$. Clearly, we can assume that $|D|$ is connected
(and replacing $\Gamma$ by its subgraph supported on $|D|$) that $|D|=E$.
Write $D=\sum_i m_iE_i$. Hence we have:
\begin{equation}\label{eq:11}
1+(D,E)+\chi(-D)<0.\end{equation}
and, using the notation $\#_i$ for the number of components of $|D-E_i|$:
\begin{equation}\label{eq:12}
\#_i+(D-E_i,|D-E_i|)+\chi(-D+E_i)\geq 0\end{equation}

\noindent for all $E_i$. Since $\chi(A+B)=\chi(A)+\chi(B)-(A,B)$, the two
inequalities can easily be compared. Indeed, first assume that $m_i=1$ for
some $i$. Then $|D-E_i|=E-E_i$ and $\#_i=\nu_i$, hence (\ref{eq:11}) and
(\ref{eq:12}) contradict  each other. Therefore, $m_i\geq 2$ for all $i$.
In that case, $|D-E_i|=E$ and $\#_i=1$, hence (\ref{eq:11}) and (\ref{eq:12})
lead to $(D-E,E_i)\geq 0$ for all $i$. Hence $(D-E,D-E)$ is also
non-negative by summation.
Since the intersection form is negative definite, this implies
$D=E$. This contradicts  the fact that $D$ is non-reduced
(and also with the fact that $g(E)=0$).
\end{proof}

\subsection{The monotoneity of the genus}
The main result of this section is the following inequality:

\begin{theorem}\label{th:2}
Consider two cycles $Z$ and $Z+D$, where $Z\in{\mathcal E^+}\setminus
\{0\}$ and $D\in{\mathcal G}$, $D\geq 0$.
Then the (virtual) genera satisfy $g(Z)\leq g(Z+D)$.
\end{theorem}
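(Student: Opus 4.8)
The plan is to reduce the inequality to the positivity Theorem~\ref{th:1} and to the virtual genus formula of Definition~\ref{def:1}. Since every $Z\in{\mathcal E}^+\setminus\{0\}$ has all coefficients $\geq 1$, one has $Z\geq E$, hence also $Z+D\geq E$; thus $|Z|=|Z+D|=E$, which is connected, so $\#(Z)=\#(Z+D)=1$ and \eqref{eq:4} reads $g(W)=1+(W,E)+\chi(-W)$ for $W=Z$ and for $W=Z+D$ (as in \eqref{eq:3}). With the identity $\chi(A+B)=\chi(A)+\chi(B)-(A,B)$ (immediate from \eqref{eq:2}, and already used in the proof of Theorem~\ref{th:1}) applied to $A=-Z$, $B=-D$, the first step is to compute
\[
g(Z+D)-g(Z)=(D,E)+\chi(-D)-(Z,D).
\]
Rewriting $\chi(-D)=g(D)-\#(D)-(D,|D|)$ via Definition~\ref{def:1} and splitting $(D,E)=(D,|D|)+(D,E-|D|)$, this becomes
\[
g(Z+D)-g(Z)=g(D)+\bigl((D,E-|D|)-\#(D)\bigr)-(Z,D).
\]

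It then remains to show the right-hand side is $\geq 0$. Here $g(D)\geq 0$ by Theorem~\ref{th:1}, while $-(Z,D)=\sum_i m_i\bigl(-(Z,E_i)\bigr)\geq 0$ since $Z\in{\mathcal E}^+$ and $D=\sum_i m_iE_i\geq 0$. The remaining term $(D,E-|D|)-\#(D)$ I would treat in two cases. If $|D|\neq E$, then, $\Gamma$ being connected (indeed a tree), each connected component $C$ of $|D|$ is joined by at least one edge to $E\setminus|D|$ (two distinct components of $|D|$ are not joined to one another); as the components are pairwise disjoint, these edges --- one per component --- are pairwise distinct, and since $(D,E-|D|)$ counts every edge from $|D|$ to $E\setminus|D|$ with multiplicity at least $1$, we get $(D,E-|D|)\geq\#(D)$, so the right-hand side is $\geq 0$. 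If $|D|=E$, then $\#(D)=1$, $(D,E-|D|)=0$ and all $m_i\geq 1$; hence $-(Z,D)\geq\sum_i\bigl(-(Z,E_i)\bigr)=\beta(Z)\geq 1$, and the right-hand side is $\geq g(D)-1+\beta(Z)\geq 0$.

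The step I expect to be the main obstacle is the case $|D|=E$: there the ``increment'' $(D,E)+\chi(-D)=g(D)-1$ may be negative --- it equals $-1$, for instance, when $D$ is the fundamental cycle of a minimal rational singularity --- so positivity of the virtual genus of $D$ alone is not enough, and one genuinely needs the hypothesis $Z\in{\mathcal E}^+\setminus\{0\}$, in the precise form of $(Z,E_i)\leq 0$ for all $i$ together with $Z\geq E$, in order to extract the compensating amount $\beta(Z)\geq 1$ from $-(Z,D)$. By contrast, the combinatorial estimate $(D,E-|D|)\geq\#(D)$ used when $|D|\neq E$ is elementary and uses only connectedness of $\Gamma$.
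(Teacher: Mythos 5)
Your proof is correct and follows essentially the same route as the paper's: the same computation of $g(Z+D)-g(Z)=g(D)+(D,E-|D|)-\#(D)-(Z,D)$, the same two cases according to whether $|D|=E$, and the same appeal to Theorem~\ref{th:1} for $g(D)\geq 0$. The only difference is that you spell out the combinatorial estimate $(D,E-|D|)\geq\#(D)$ and the extraction of $\beta(Z)\geq 1$ in slightly more detail than the paper does.
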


\begin{proof} By (\ref{eq:3}), one has
\begin{eqnarray*}\label{eq:16}
g(Z+D)-g(Z)&=&(D,E)+\chi(-D)-(D,Z)\\
&=&g(D)+(D,E-|D|)-\#(D)-(D,Z).
\end{eqnarray*}
If $|D|=E$ then $\#(D)=1$ and $-(D,Z)\geq 1$ (otherwise we would have
$(Z,E_i)=0$
for all $i$, or $Z=0$). If $|D|<E$, then $-(D,Z)\geq 0$ and
$(D,E-|D|)\geq (|D|,E-|D|)\geq \#(D)$ by the connectivity of $\Gamma$.
Hence, in both cases, the right--hand side is $\geq g(D)$.
Since $g(D)\geq 0$ by (\ref{th:1}), the inequality follows.
\end{proof}

\begin{corollary}\label{cor:1} The genus   is monotone:  for any
$Z_1$ and $Z_2$ from ${\cal E}^+\setminus \{0\}$  with
$Z_1\leq Z_2$ one has $g(Z_1)\leq g(Z_2)$. In particular, the statements of
(\ref{re:m}) also hold.
\end{corollary}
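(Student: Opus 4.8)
The plan is to deduce the corollary immediately from Theorem \ref{th:2}. Given $Z_1, Z_2 \in \mathcal{E}^+ \setminus \{0\}$ with $Z_1 \leq Z_2$, set $D := Z_2 - Z_1$; by the definition of the partial ordering this is an effective cycle, $D \in \mathcal{G}$ with $D \geq 0$. Applying Theorem \ref{th:2} with the distinguished cycle $Z = Z_1 \in \mathcal{E}^+ \setminus \{0\}$ and this $D$ gives $g(Z_1) \leq g(Z_1 + D) = g(Z_2)$, which is precisely the assertion. Since the only $\pi$--dependence in (\ref{eq:3}) enters through the intersection form, the same argument applies verbatim for each fixed resolution $\pi$, so in the language of Definition \ref{def:0} the family $I = \{g_\pi\}_\pi$ (with $I_\pi(Z) = g(Z)$) is monotone.

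Next I would verify the hypothesis of Remark \ref{re:m}: the collection $\{g_\pi\}_\pi$ does come from a genuine invariant of the Milnor open book, namely its page--genus, which is a non--negative integer. Non--negativity of $g(Z)$ for $Z \in \mathcal{E}^+\setminus\{0\}$ was already recorded after Definition \ref{def:1}, and independence of the chosen embedded resolution is contained in (\ref{en:1})(\ref{en:1.1}). With monotoneity established, Remark \ref{re:m} then yields the two consequences directly: for a fixed analytic type the minimum of $g$ over all Milnor open books is attained at $\mo_{\max}$, and for a fixed topological type it is attained at $\mo_{\min}$. Unwinding this, one is simply using that $Z_{max} \leq D_\pi(f)$ for every isolated germ $f$ (as $Z_{max}$ is the minimal nonzero element of $\mathcal{A}^+$) and that $Z_{min} \leq Z$ for every $Z \in \mathcal{E}^+\setminus\{0\}$.

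I expect essentially no obstacle here: all the substance is in Theorem \ref{th:2} (hence ultimately in the positivity of the virtual genus, Theorem \ref{th:1}). The only point deserving a word of care is the bookkeeping distinction between $g$ viewed as a function of a cycle in a fixed resolution and $g$ viewed as an invariant attached to the open book itself; these agree because the page--genus does not depend on the embedded resolution used to compute it, so the monotoneity statement and the consequences imported from Remark \ref{re:m} are well posed and resolution--independent.
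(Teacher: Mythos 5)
Your proposal is correct and is exactly the argument the paper intends: the corollary follows immediately from Theorem \ref{th:2} by writing $Z_2 = Z_1 + D$ with $D = Z_2 - Z_1 \geq 0$, and the remaining remarks about Remark \ref{re:m} and resolution-independence are the same bookkeeping the paper relies on. No issues.
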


\section{The Milnor number and the number of boundary components}

\subsection{The monotoneity of the Milnor number}
If one combines (\ref{eq:1}) and (\ref{eq:3}), one gets for any
$Z\in {\mathcal E^+}\setminus \{0\}$:
\begin{eqnarray}\label{eqn:1}
\mu(Z)&=& 1+(Z,E)+2\cdot \chi(-Z)\\
&=& g(Z)+\chi(-Z).\label{eqn:2}
\end{eqnarray}
Again, we extend the above formula (in a compatible way with  ({\ref{eqn:2}))
for any $D\geq 0$ by considering
the `virtual Milnor number' $\mu(D)$  as
$g(D)+\chi(-D)$, defined via the virtual genus $g(D)$.

Clearly, $\mu(Z)\geq 0$ for any $Z\in {\mathcal E^+}\setminus \{0\}$, since
$\mu(Z)$ stays for a Betti number. Moreover, for any rational graph
$\Gamma$, one has $\min\chi= 0$, hence for them the virtual invariants
satisfy $\mu(D)\geq g(D)\geq 0$ too. The next theorem generalizes this
for a general $\Gamma$.

\begin{theorem}\label{th:mu} Set  $D\in{\mathcal G}$ with  $D\geq
  0$. Then the following inequalities hold:

\begin{enumerate}
\item\label{en:mu.1}  \ \ $\chi(-D)\geq 0$;
\item\label{en:mu.2}  \ \ $\mu(D)\geq g(D)\geq 0$;
\item\label{en:mu.3}  \ \ $\mu(Z+D)\geq \mu(Z)$ \ for any
 $Z\in {\mathcal E^+}\setminus \{0\}$.
\end{enumerate}
\end{theorem}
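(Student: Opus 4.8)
The plan is to prove the three statements in order, using the earlier results on the virtual genus as black boxes. For part (\ref{en:mu.1}), I would argue by induction on the number of components in $|D|$, or equivalently take a minimal counterexample $D>0$ with $\chi(-D)<0$ and reduce to the case $|D|=E$ as in the proof of Theorem~\ref{th:1}. Using the additivity formula $\chi(A+B)=\chi(A)+\chi(B)-(A,B)$, write $\chi(-D)=\chi(-D+E_i)+\chi(-E_i)-(D-E_i,E_i)=\chi(-D+E_i)-(D-E_i,E_i)$ for any fixed $i$. Minimality gives $\chi(-D+E_i)\geq 0$ (when $D-E_i\geq 0$, i.e. $m_i\geq 1$; and $D-E_i\geq 0$ whenever $E_i\subset|D|$, which we may assume), so $\chi(-D)<0$ forces $(D-E_i,E_i)>0$ for every $i$. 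If some $m_i=1$, then examining the sign of $(D-E_i,E_i)$ together with negative-definiteness should already be contradictory; otherwise $m_i\geq 2$ for all $i$, and summing $(D-E_i,E_i)>0$ over $i$ (or comparing $D$ with $E$ as in Theorem~\ref{th:1}) yields $(D-E,D-E)\geq 0$, forcing $D=E$ by negative-definiteness — but $\chi(-E)\geq 0$ is easy to check directly, a contradiction. So part (\ref{en:mu.1}) follows.

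Part (\ref{en:mu.2}) is then immediate: $g(D)\geq 0$ is exactly Theorem~\ref{th:1}, and $\mu(D)=g(D)+\chi(-D)\geq g(D)$ follows from part (\ref{en:mu.1}). This step requires no new work beyond assembling what is already proved.

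For part (\ref{en:mu.3}), I would mimic the structure of the proof of Theorem~\ref{th:2}. Using (\ref{eqn:1}), compute
\begin{equation*}
\mu(Z+D)-\mu(Z)=(D,E)+2\chi(-D)-2(D,Z).
\end{equation*}
Now split off the virtual genus: since $\mu(D)=g(D)+\chi(-D)$ and $g(D)=\#(D)+(D,|D|)+\chi(-D)$, we have $(D,E)+2\chi(-D)=\mu(D)+g(D)-(D,E-|D|)+\#(D)-2\#(D)+\ldots$; more cleanly, I would reuse the two cases from Theorem~\ref{th:2}. If $|D|=E$ then $-(D,Z)\geq 1$ and $\#(D)=1$; if $|D|<E$ then $-(D,Z)\geq 0$ and $(D,E-|D|)\geq\#(D)$ by connectivity of $\Gamma$. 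In either case the right-hand side is bounded below by $\mu(D)$ plus a nonnegative quantity, and $\mu(D)\geq 0$ by part (\ref{en:mu.2}). Hence $\mu(Z+D)\geq\mu(Z)$.

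The main obstacle is part (\ref{en:mu.1}): one must handle the $m_i=1$ case of the minimal counterexample carefully, since (unlike the genus inequality, where the analogous case produced a direct contradiction between two explicit inequalities) here the bookkeeping is slightly more delicate and one needs to be sure that reducing to $|D|=E$ and then peeling off $E$ via negative-definiteness really closes the argument. Everything else is a routine reshuffling of the additivity formula for $\chi$ together with the already-established positivity of the virtual genus.
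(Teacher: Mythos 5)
Your parts (2) and (3) are essentially fine: (2) is just an assembly of Theorem~\ref{th:1}, part (1) and the definition $\mu(D)=g(D)+\chi(-D)$, and your computation in (3), once the omitted algebra (the ``$\ldots$'') is carried out, gives $\mu(Z+D)-\mu(Z)=\mu(D)+(D,E-|D|)-\#(D)-2(D,Z)$, which is $\geq \mu(D)\geq 0$ by the same two cases as in Theorem~\ref{th:2}. (The paper gets (3) more cheaply by writing $\mu(Z+D)-\mu(Z)=g(Z+D)-g(Z)+\chi(-D)-(Z,D)$ and quoting Theorem~\ref{th:2} as a black box, but your route is also valid.) Both, however, rest on part (1), and that is where your argument breaks.

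The error is in the identity $\chi(-D)=\chi(-D+E_i)+\chi(-E_i)-(D-E_i,E_i)=\chi(-D+E_i)-(D-E_i,E_i)$: you have silently set $\chi(-E_i)=0$, whereas adjunction gives $\chi(E_i)=1$ and hence $\chi(-E_i)=-1-E_i^2$, which is $\geq 0$ but vanishes only for $(-1)$-curves. With your value, a minimal counterexample only yields $(D-E_i,E_i)>0$, i.e.\ $(D,E_i)\geq E_i^2+1$, and this is too weak to close the argument: when $m_i=1$ one has $(D-E_i,E_i)=\sum_{j\neq i}m_j(E_j,E_i)\geq 0$ automatically (and usually $>0$), so that case produces no contradiction; and summing $(D-E_i,E_i)>0$ over $i$ gives $(D,E)>\sum_iE_i^2$, not $(D-E,D-E)\geq 0$. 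The repair is to use the correct $\chi(-E_i)$: then minimality of the counterexample forces $(D,E_i)\geq 0$ for every $E_i$ in the support of $D$, and summing these with the weights $m_i$ gives $D^2\geq 0$, contradicting negative definiteness --- no case split on the $m_i$ and no reduction to $D=E$ is needed. This is exactly the paper's argument, phrased there as: for every $D>0$ there is some $E_i\leq D$ with $\chi(-D+E_i)\leq\chi(-D)$, after which one descends to $\chi(0)=0$.
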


\begin{proof}
The proof of (\ref{en:mu.1}) is well-known for specialist,
for the convenience of the reader we provide it.
We claim that for any $D>0$ there exists at least one $E_i$ with
$E_i\leq D$ such that $\chi(-D+E_i)\leq \chi(-D)$. This by induction shows
that $\chi(-D)\geq 0$. The proof of the claim runs as follows. Assume that
it is not true for some $D>0$. Then for any $E_i$ from its support one has
$\chi(-D+E_i)\geq \chi(-D)+1$. This is equivalent with $(D,E_i)\geq 0$, hence
by summation one gets $D^2\geq 0$. This implies $D=0$, a contradiction.

(\ref{en:mu.2}) follows from (\ref{eqn:2}),
part  (\ref{en:mu.1}) and (\ref{th:1}). For
(\ref{en:mu.3}) notice that by (\ref{eqn:2})
$$\mu(Z+D)-\mu(Z)=g(Z+D)-g(Z)+\chi(-D)-(Z,D).$$
Notice that $g(Z+D)\geq g(Z)$ by (\ref{th:2}), $\chi(-D)\geq 0$ by
(\ref{en:mu.1}), and $-(Z,D)\geq 0$ since  $Z\in {\mathcal E^+}$.
\end{proof}

\begin{corollary}\label{cor:22} The Milnor number   is monotone:  for any
$Z_1$ and $Z_2$ from ${\cal E}^+\setminus \{0\}$  with
$Z_1\leq Z_2$ one has $\mu(Z_1)\leq \mu(Z_2)$. In particular, the statements of
(\ref{re:m}) also hold for $\mu$.
\end{corollary}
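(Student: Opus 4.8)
\emph{Proof plan for Corollary \ref{cor:22}.}
The plan is to obtain the statement from Theorem \ref{th:mu}(\ref{en:mu.3}) by exactly the same two-line argument that produces Corollary \ref{cor:1} out of Theorem \ref{th:2}. Given $Z_1\leq Z_2$ with $Z_1,Z_2\in{\mathcal E}^+\setminus\{0\}$, I would set $D:=Z_2-Z_1$; by the definition of the partial ordering on ${\mathcal G}$ this is an element with $D\geq 0$, so Theorem \ref{th:mu}(\ref{en:mu.3}) applies with $Z=Z_1$ and this $D$, and it yields
\[
\mu(Z_2)=\mu(Z_1+D)\geq \mu(Z_1).
\]
No induction is even needed, since part (\ref{en:mu.3}) of Theorem \ref{th:mu} already allows an arbitrary $D\geq 0$; a single comparison does it. (For $Z_2\in{\mathcal E}^+\setminus\{0\}$ one has $|Z_2|=E$ connected, so the virtual Milnor number of $Z_2$ agrees with the genuine one, and the inequality really concerns $\mu$.)

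For the ``in particular'' part I would check that $\mu$ fits the formal framework of Definition \ref{def:0} and Remark \ref{re:m}. The collection $\{\mu_\pi\}_\pi$, where $\mu_\pi\colon{\mathcal E}^+\setminus\{0\}\to\Z_{\geq 0}$ sends $Z$ to the first Betti number of the page of the Milnor open book attached to $Z$ (cf. (\ref{ss:2.3})), is precisely an instance of the setup of Definition \ref{def:0}, and the first part of the corollary says exactly that it is monotone. Moreover $\mu(Z)$ is by construction the first Betti number of the page, hence an honest invariant of the Milnor open book $\mo$ itself, independent of the chosen resolution; so $\{\mu_\pi\}_\pi$ ``comes from'' an invariant of the open book in the sense of Remark \ref{re:m}. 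Therefore Remark \ref{re:m} applies verbatim to $\mu$: for a fixed analytic type the minimum of $\mu(\mo)$ over all Milnor open books is realised by $\mo_{\max}$ (the generic hyperplane section), and for a fixed topological type the minimum of $\mu(\mo)$ over the Milnor open books associated with all analytic structures supported by that topological type is realised by $\mo_{\min}$.

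There is no genuine obstacle here: all the analytic content (A'Campo's formula, the structure of the Lipman cone, the positivity statements $\chi(-D)\geq 0$ and $g(D)\geq 0$, and the inequality $-(Z,D)\geq 0$ for $Z\in{\mathcal E}^+$) has already been packaged into Theorem \ref{th:mu}. The only point deserving a moment's care is the bookkeeping feeding into Remark \ref{re:m}: that the relevant cycle inequalities are $Z\leq Z_{\max}$ for every $Z\in{\mathcal A}^+\setminus\{0\}$ within one fixed analytic type (giving the minimum at $\mo_{\max}$), and $Z_{\min}\leq Z$ for every $Z\in{\mathcal E}^+\setminus\{0\}$ in any resolution of any analytic structure (giving the minimum at $\mo_{\min}$). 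This is identical to the bookkeeping already used for the genus in Corollary \ref{cor:1}.
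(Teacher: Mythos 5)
Your main argument is exactly the paper's: the corollary is immediate from Theorem \ref{th:mu}(\ref{en:mu.3}) applied to $D:=Z_2-Z_1\geq 0$, and the paper gives no separate proof beyond this. One correction to your ``bookkeeping'' paragraph, which is precisely the point you flagged as deserving care: you wrote that the relevant inequality within a fixed analytic type is $Z\leq Z_{\max}$ for every $Z\in{\mathcal A}^+\setminus\{0\}$, but this is reversed. The cycle $Z_{\max}$ is the unique \emph{minimal} element of ${\mathcal A}^+\setminus\{0\}$ (the name refers to the maximal ideal, not to maximality in the partial order), so the correct inequality is $Z_{\max}\leq Z$; it is this direction that, combined with monotonicity, gives $\mu(\mo_{\max})\leq \mu(\mo)$ and hence realises the minimum at the generic hyperplane section. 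With that sign fixed, the rest of your reduction to Remark \ref{re:m} is fine.
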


\subsection{The number of binding components}
Recall that the number of binding
components of the open book associated with
some $Z\in {\mathcal E^+}\setminus \{0\}$ is $\beta(Z)=-(Z,E)$.
We wish to understand
the variation of this number in the realm of (Milnor) open books with
page-genus fixed. In order to do this, let us consider the following
subsets of  ${\mathcal E^+}$:
$${\mathcal E}^+_{min}:=\{Z\, |\, g(Z)=g(Z_{min})\}, \ \mbox{and} \
{\mathcal E}^+_{g=a}:=\{Z\, |\, g(Z)=a\},$$
where $a\in\Z$.  Since $\mu(Z)-\beta(Z)=2g(Z)-1$, we get:

\begin{lemma}\label{lem:bn}
For any $a$,  the restrictions of $\mu$ and $\beta$ to
${\mathcal E}^+_{g=a}$ take their minima on the same elements of
${\mathcal E}^+_{g=a}$. In particular,
the restriction of $\mu$ (resp. of $\beta$) on
${\mathcal E}^+_{min}$ is $\mu(Z_{min})$ (resp. $\beta(Z_{min})$).
\end{lemma}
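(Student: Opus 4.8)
The plan is to read everything off the identity $\mu(Z)-\beta(Z)=2g(Z)-1$, which is just a rearrangement of (\ref{eq:1}). On the set ${\mathcal E}^+_{g=a}$ the right–hand side is the constant $2a-1$, so $\mu(Z)=\beta(Z)+(2a-1)$ for every $Z$ in this set; hence $\mu$ and $\beta$ differ by a fixed constant on ${\mathcal E}^+_{g=a}$ and therefore attain their minima at exactly the same cycles, with the two minima themselves differing by $2a-1$. This is the entire content of the first assertion, and it requires no more than quoting (\ref{eq:1}).

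For the second assertion I would first observe that $Z_{min}\in{\mathcal E}^+_{min}$ by definition, and that ${\mathcal E}^+_{min}={\mathcal E}^+_{g=a_0}$ with $a_0:=g(Z_{min})$. By Corollary \ref{cor:1} (monotoneity of the genus), every $Z\in{\mathcal E}^+\setminus\{0\}$ satisfies $g(Z)\geq g(Z_{min})=a_0$, since $Z\geq Z_{min}$ (because $Z_{min}$ is the minimal element of ${\mathcal E}^+\setminus\{0\}$). Thus $a_0$ is the smallest genus occurring on ${\mathcal E}^+\setminus\{0\}$, and ${\mathcal E}^+_{min}$ really is the locus where the genus is minimized. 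Now apply Corollary \ref{cor:22}: for any $Z\in{\mathcal E}^+_{min}$ we have $Z\geq Z_{min}$, so $\mu(Z)\geq\mu(Z_{min})$; hence the restriction of $\mu$ to ${\mathcal E}^+_{min}$ takes its minimum value $\mu(Z_{min})$, attained at $Z_{min}$. Combining this with the first part — $\mu$ and $\beta$ have the same minimizers on ${\mathcal E}^+_{min}={\mathcal E}^+_{g=a_0}$ — gives that $\beta$ restricted to ${\mathcal E}^+_{min}$ also attains its minimum at $Z_{min}$, with value $\beta(Z_{min})$.

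I do not expect any real obstacle here: the lemma is essentially a bookkeeping consequence of the linear relation among $\mu$, $\beta$ and $g$ together with the two monotoneity corollaries already proved. The only point deserving a word of care is the implication ``$Z\in{\mathcal E}^+_{min}\Rightarrow Z\geq Z_{min}$'', which is immediate from the fact recalled in the preliminaries that $Z_{min}$ is the unique minimal element of ${\mathcal E}^+\setminus\{0\}$, so this holds for \emph{every} nonzero element of the Lipman cone, not merely those in ${\mathcal E}^+_{min}$. With that in hand, both statements follow in a couple of lines.
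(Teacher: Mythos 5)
Your proof is correct and follows exactly the route the paper intends: the lemma is stated immediately after the identity $\mu(Z)-\beta(Z)=2g(Z)-1$, which is constant on ${\mathcal E}^+_{g=a}$, and the second assertion is the combination of this with Corollary \ref{cor:22} and the fact that $Z\geq Z_{min}$ for every $Z\in{\mathcal E}^+\setminus\{0\}$. Nothing to add.
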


\section{Application to the canonical contact structure of the link}\label{s5}

Our application targets the invariants
sg$_{an}(\xi_{can})$, bn$_{an}(\xi_{can})$ and $\mathfrak{n}_{an}(\xi_{can})$;
for notations, see Introduction. Indeed,
the previous results read as follows:
$$\mbox{sg}_{an}(\xi_{can})=g(Z_{min});$$
$$\mbox{bn}_{an}(\xi_{can})=\beta(Z_{min}); $$
$$ \mathfrak{n}_{an}(\xi_{can})=\mu(Z_{min})-1.$$
In particular,
 $$\mathfrak{n}_{an}(\xi_{can})-\mbox{bn}_{an}(\xi_{can})=
2\cdot \mbox{sg}_{an}(\xi_{can})-2.$$
These facts answer some of the questions of  \cite{burak-etnyre}, section 8,
at least in the realm of Milnor open books.

{}


\begin{thebibliography}{99}

\bibitem{A'Campo} A'Campo, N.:
Sur la monodromie des singularit\'es isol\'ees d'hypersurfaces complexes,
{\it Invent. Math.} {\bf 20} (1973), 147-169.


\bibitem{A} Artin, A.: On isolated rational singularities of
surfaces,  {\it Amer. J. Math.} {\bf  88} (1) (1966), 129-136.

\bibitem{Mohan} Altinok, S. and Bhupal, M.:  Minimal page-genus of Milnor
open books on links of rational surface singularities, {\em Singularities II},
{\it Contemp. Math.} {\bf 475} Amer. Math. Soc., Providence, RI, 2008, 1-10.

\bibitem{BL} Blank, S. and Laudenbach, F.: Isotopie des formes ferm\'ees en
dimension 3, {\em Inv. Math.} {\bf 54} (1979), 103-177.

\bibitem{CP} Caubel, C., N{\'e}methi, A., Popescu-Pampu,
  P.:  Milnor open books and Milnor fillable contact
    3-manifolds,   {\it  Topology} {\bf 45} (2006), 673-689.


\bibitem{EN} Eisenbud, D. and Neumann, W.: Three--dimensional link theory
and invariants of plane curve singularities, {\em Annals of Math. Studies}
{\bf 110}, Princeton University Press, 1985.


\bibitem{burak-etnyre} Etnyre, J.  and Ozbagci, B.: Invariants of contact
structures from open books,
{\it Trans. AMS} {\bf  360} (6) (2008), 3133-3151.

\bibitem{Gi} Giroux, E.: G\'eometrie de contact: de la dimension trois les
  dimensions sup\'erieures, {\it Proc. of the International Congress of
    Math.} (Beijing 2002), Vol. II, 405-414.

\bibitem{Laufer} Laufer, H.: On rational singularities, {\it
Amer. J. Math.} {\bf 94} (1972), 597-608.

\bibitem{Lip} Lipman, J.: Rational singularities, with
applications to algebraic surfaces and unique factorization,
{\it Inst. Hautes Etudes Sci. Publ. Math.} {\bf 36} (1969), 195-279.

\bibitem{41}  N\'emethi, A.: The resolution of some surface singularities, I.,
(cyclic coverings),
{\it Contemporary Mathematics} {\bf 266}, Singularities in Algebraic
and Analytic Geometry, (C. G. Melles and R. I. Michler Editors), American
Math. Soc. 2000, 89-128.

\bibitem{NP} Neumann W.D. and  Pichon, A.:   Complex analytic realization of links, Proceedings
of the international conference, ``Intelligence of Low Dimensional Topology
2006", Series on {\it Knots and Everything} {\bf  n° 40}, World Scientific Publishing Co.

\bibitem{P} Pichon, A.:  Fibrations sur le cercle et surfaces complexes, {\it
Annales de l'Institut Fourier} {\bf  51} (2001), 337-374.

\bibitem{W} Waldhausen, F.: On irreducible 3--manifolds that are sufficiently large, {\em Ann. of Math.} {\bf 87} (1968), 56-88.



\end{thebibliography}
\end{document}